\date{\empty}
\theoremstyle{plain}
\newtheorem{theorem}{Theorem}[section]
\newtheorem{lemma}[theorem]{Lemma}
\newtheorem{property}[theorem]{Property}
\theoremstyle{definition}
\newtheorem{remark}[theorem]{Remark}
\newtheorem{example}[theorem]{Example}
\newcommand{\er}{\mathbb{R}}%{{\Bbb R}}
\newcommand{\en}{\mathbb{N}}%{{\Bbb N}}
\def\grad{\nabla}
\begin{document}
\title[Bifurcation values of $C^{\infty}$ functions ]{Bifurcation values of $C^{\infty}$ functions}
\author[Michał Klepczarek]{Michał Klepczarek}
\thanks{This research was partially supported by OPUS Grant No 2012/07/B/ST1/03293 (Poland)}
 \keywords{Polynomial, bifurcation value, manifold.}
  \subjclass[2010]{Primary 14D06, 14Q20; Secondary 58K05.}
\date{\today}

\begin{abstract} 

We show how one can use a trivialization of a function $f:\er^n\to\er$ on fibers of some function $g:\er^n\to\er$ to construct a trivialization of $f$ in $\er^n$. Additionally we adopt a method  for trivialising functions which satisfy the $\rho_0$-regularity condition to the case of functions defined on hypersurfaces of the form $M=g^{-1}(0)$.

%trivialization functions satisfying the $\rho_0$-regularity condition to the case of functions defined on hypersurfaces of form $M=g^{-1}(0)$.
%In the present paper we
 
\end{abstract}
\maketitle

\section*{Introduction}

Let $f:\er^n\to\er$ be a polynomial. It is well-known that there exists a finite set such that $f$ is a $C^{\infty}$ fibration over the complement of this set. The smallest such a set is called the set of \textit{bifurcation values} of $f$ and denoted by $B(f)$. It contains the set of critical values $K_0 (f)$ and the set of \textit{bifurcation values at infinity} $B_{\infty} (f)$ of $f$. Finding an effective description of the set $B_{\infty} (f)$ is still an open question. However we can approximate $B_{\infty}(f)$ using different supersets. The most popular one uses the so-called Malgrange condition. 

We say that $f$ satisfies the \textit{Malgrange condition} in $\lambda\in \er$ if there exists a neighborhood $U$ of $\lambda$ and constans $\delta,R >0$ such that
 \begin{equation}\tag{M} \label{M}
 \forall_{x\in f^{-1} (U)} \quad \|\grad f(x) \| \|x\| \geqslant \delta \quad \text{for} \quad \|x\|\geqslant R.
\end{equation}  
The set $K_{\infty}(f)$ of all values which do not satisfy Malgrane's condition is called the set of \textit{asymptotic critical values} of $f$ i.e.
$$
K_{\infty}(f):=\{\lambda\in\er  | \exists_{(x_k)\subset\er^n} \|x_k\|\to\infty , f(x_k)\to\lambda , \|x_k\| \|\grad f(x_k)\|\to 0 \}.
$$
Jelonek and Kurdyka in \cite{JK1} gave an effective characterization of this set. Moreover in \cite{JK2} they showed how to proceed when the polynomial $f$ is defined on an algebraic set (see also \cite{Je}).
 
In this paper we show how one can use a trivialization of a smooth function $f:\er^n\to\er$ on fibers of some other smooth function $g:\er^n\to\er$ to construct a desired
trivialization of $f$ in $\er^n$. More precisely, we introduce a notion of $(g,S)$-Malgrange condition; for a smooth function $g$ and an open set $S\subset \er$ we say that  $f$ satisfies the $(g,S)$-\emph{Malgrange condition} in $\lambda$ if

%let $g$ be smooth and $S\subset\er$ be an open set. 
%We say that $f$ satisfies $(g,S)$-Malgrange's condition in $\lambda$ if 
 \begin{enumerate}
  \item $\grad g(x)\neq 0$ near $f^{-1}(\lambda)$ outside some compact set
  \item $g^{-1}(S)$ contains all fibers of $f$ near $\lambda$
  \item for any $s\in S$ the function $f|_{g^{-1}(s)} : g^{-1}(s)\to\er$ satisfies the Malgrange condition in $\lambda$
\end{enumerate}
(compare to the definition in Section 3).
The set of all $\lambda$ that don't satisfy the $(g,S)$-Malgrange condition we will denote by $K_{\infty} ^{(g,S)}(f)$. Our aim is to prove $B(f)\subset  K_{\infty} ^{(g,S)}(f)\cup K_0 (f)$ and therefore 
$$
B(f)\subset \bigcap_{(g,S)} K_{\infty} ^{(g,S)}\cup K_0 (f)
$$
(see Theorem \ref{tw.tryw foliacji} and Remark \ref{remark (g,S)-Mal}). It is worth noting that in (3) of the definition of $(g,S)$-Malgrange condition we allow different constans $\delta_s$ (in inequality(\ref{M})) for different $s\in S$. Therefore our method may be applied even if $\inf \{\delta_s |\;s\in S\}=0$ (i.e. when the standart Malgrange condition fails). This often leads us to more sharp aproximation of $B(f)$ than in the classical method (see Example \ref{exa2new} and Example \ref{exa3new}).

The other popular approach to finding the bifurcation values uses a critical set $\mathcal{M}_a(f)$ of the map $(f,\rho_a)$ where $\rho_a$ is the Euclidean distance function from a fixed point $a\in\er^n$. We can define the set of asymptotic $\rho_a$-nonregular values as
$$
S_a(f):=\{\lambda\in\er |\; \exists_{(x_k)\subset \mathcal{M}_a(f)} \|x_k\|\to\infty , f(x_k)\to\lambda \}.
$$
It has been proven in \cite{Ti},\cite{DRT},\cite{DT} that $B_{\infty}(f)\subset S_a(f)$ for any $a \in \er^n$, thus in particular 
$$
B_{\infty}(f)\subset S_{\infty} (f):=\bigcap_{a\in\er^n} S_a(f).
$$
In the second part of this paper we will show how to get a simillar result when $f$ is defined on the manifold of the form $g^{-1}(0)$. We will introduce an analog of the condition used in \cite{NZ1} and \cite{NZ2} that allows us to trivialize function $f$ (see Theorem \ref{tw. trywializacja war. N} and Remark \ref{rho}).

\section{Auxiliary results}

In this section we collect some useful facts about differential equations, which we will use later in this article.

Let $M$ be a smooth $m$-dimantional manifold. We denote by $T_x M$ the tangent space to the manifold $M$ at a point $x\in M$ and $TM:=\bigcup_{x\in M} T_x M$. Let $W:M\to TM$ be a smooth vector field on $M$.

\begin{lemma}\label{lem.oprzedlrozw}
Let $\phi :(t_0,\xi ]\to M$ be a solution of the system of diffe\-rential equations $x'=W(x)$. Assume that there exists a sequence  $t_k \in (t_0,\xi)$, $k\in\en$, 
 such that $\lim_{k\to\infty} t_k = t_0 $ and $\lim_{k\to\infty} \phi(t_k)=x_0 \in M $. Then there exists $lim_{t\to t_0}\phi (t)=x_0$ and $\phi $ is not the maximal solution to the left.% left-integral(??)
\end{lemma}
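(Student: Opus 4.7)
The plan is to exploit the local existence and uniqueness theorem for the vector field $W$ near $x_0$ in order to extend $\phi$ strictly to the left of $t_0$; once such an extension is produced, both conclusions of the lemma — the existence of the left limit $x_0$ and non-maximality — will follow immediately.

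Concretely, I would first apply the flow box theorem at $x_0$: since $W$ is smooth, there exist a neighbourhood $V\subset M$ of $x_0$, a number $\eta>0$, and a smooth map $\Phi\colon(-\eta,\eta)\times V\to M$ such that, for every $y\in V$, the curve $t\mapsto\Phi(t,y)$ is the unique integral curve of $W$ on $(-\eta,\eta)$ with value $y$ at time $0$. Using the assumption $t_k\to t_0$ and $\phi(t_k)\to x_0$, I would then pick $k$ so large that $t_k\in(t_0,t_0+\eta/2)$ and $\phi(t_k)\in V$, and set $\phi_k(t):=\Phi(t-t_k,\phi(t_k))$ for $t\in(t_k-\eta,t_k+\eta)$. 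This interval contains $t_0$, and on $(t_0,\xi]\cap(t_k-\eta,t_k+\eta)$ the functions $\phi$ and $\phi_k$ are both solutions of $x'=W(x)$ taking the value $\phi(t_k)$ at $t_k$, hence coincide by uniqueness. Gluing therefore yields a solution
\[
\tilde\phi(t):=\begin{cases}\phi_k(t),&t\in(t_k-\eta,t_0],\\ \phi(t),&t\in(t_0,\xi],\end{cases}
\]
defined on the strictly larger interval $(t_k-\eta,\xi]$, which already shows that $\phi$ is not maximal to the left.

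To identify $\tilde\phi(t_0)$, I would compute directly: $\tilde\phi(t_0)=\Phi(t_0-t_k,\phi(t_k))$ for any admissible $k$. By uniqueness this value does not depend on the choice of $k$, since two such extensions necessarily agree on their common interval of definition (they already agree on $(t_0,\xi]$). Passing to the limit $k\to\infty$ and invoking continuity of $\Phi$ at $(0,x_0)$ then gives $\tilde\phi(t_0)=\Phi(0,x_0)=x_0$. Because $\tilde\phi$ is continuous and extends $\phi$, this is exactly the assertion $\lim_{t\to t_0}\phi(t)=x_0$.

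The main subtle point is precisely the passage from the sequential convergence $\phi(t_k)\to x_0$ to the full one-sided limit $\phi(t)\to x_0$: this is not automatic for an arbitrary continuous curve, and it is the content of the lemma that the ODE hypothesis rules out more complicated behaviour (for instance oscillation of $\phi$ near $t_0$). The flow box reformulation keeps the bookkeeping transparent, and an entirely equivalent argument can be written using continuous dependence of solutions on initial data in place of the flow $\Phi$.
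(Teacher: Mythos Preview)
Your argument is correct and follows essentially the same strategy as the paper: both proofs invoke the uniform local existence/uniqueness theorem near $x_0$ (the paper's ``Property~\ref{Property1}'' is precisely the existence of your local flow $\Phi$, stated in a chart) and then glue to extend $\phi$ past $t_0$. Your packaging is slightly more streamlined, since working with the flow directly on $M$ lets you bypass the paper's indirect step of showing that $\phi$ cannot leave the coordinate chart on $(t_0,t_1]$; one small remark is that what you call the ``flow box theorem'' is really just the local flow (continuous dependence) statement, not the straightening theorem --- but you use it correctly.
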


\begin{proof}
Let $\varphi=(\varphi_1,\ldots,\varphi_m):U\to A\subset\er^m$ be a map in a neighborhood $U\subset M$ of the point $x_0$. Choosing a subsequence if necessary, we may assume that $x_k:=\phi (t_k)\in U$ for $k\in\en$. Than we can write 
$$
W(x)=\sum_{i=1}^{m} W_i (x)\frac{\partial}{\partial \varphi_i} \qquad \text{ for } x\in U, $$
where $W_i:U\to\er$ for $i=1,..,m$. 

Let $y_0:=\varphi (x_0)$ and $y_k:=\varphi (x_k)$ for $k\in\en$. 

Let $\alpha\in\er\cup\{-\infty\}$ be a minimal number for which $\phi((\alpha,t_1])\subset U$. Put $I=(\alpha,t_1]$. 
Denote 
$\phi_\varphi := \varphi\circ \phi :I\to A$ and  
$\phi_{\varphi _i}:=\varphi_i\circ \phi $
 for $i=1,..,m$. Then we have $\phi'(t)=\sum_{i=1}^{m} \phi_{\varphi_i}'(t)\dfrac{\partial}{\partial\varphi_i}$ for $t\in I$ and 
 $$\sum_{i=1}^{m} \phi_{\varphi_i}'(t)\dfrac{\partial}{\partial\varphi_i}=\sum_{i=1}^{m} W_i (\phi(t))\frac{\partial}{\partial \varphi_i}.
 $$ 
Therefore 
 \begin{equation}\label{eq1}
 \qquad \phi_{\varphi_i}'(t)=W_{\varphi^{-1}}^i (\phi_{\varphi_i}(t)) \qquad \text{for } t\in I \;\text{ } i=1,..,m,
 \end{equation}
 where $W_{\varphi^{-1}}^i (y):=W_i \circ \varphi ^{-1} (y)$ for $y\in A$. 

To complete the proof we need the following well known fact. %(see for example [Ka] ;) pag.7 wł.4 that $G=\er\times A, (\xi,\eta)=(t_0,y_0)$) that 

\begin{property}\label{Property1} There exists an interval $J$ such that $t_0\in J$ and a neighborhood $\Gamma\subset \er\times A$ of $(t_0,y_0)$ such that for any $(t',y')\in\Gamma$ every maximal solution $\gamma$ of sytem (\ref{eq1}) that passes through $(t',y')\in\Gamma$ is defined at least on $J$ and the graph of $\gamma|_{J}$ is contained in some rectangle $T\subset \er\times A$. 
\end{property}

By choosing a subsequence of the sequence $(t_k,y_k)$, we may assume that $(t_k,y_k)\in \Gamma$ for $k\in\en$ and $\xi>t_k>t_l>t_0$ for $k<l$. 

%Taking a subsequence if necessary, we may assume that $(t_k, y_k)\in\Gamma$ and . 
From Property \ref{Property1} we have that $\alpha \le t_0$. Indeed, otherwise there exists $t'\in  (t_0,t_1)$ such that
  $\phi (t')\notin U$, hence there exists a  maximal solution to the left $\widehat{\phi _\varphi}:\widehat{I}\to A$ of the system (\ref{eq1}) such that $\widehat{\phi _\varphi}$ goes through
$\Gamma$ and $t_0\notin \widehat{I}$ which contradicts Property \ref{Property1}. 

Therefore $(t_0,t_1]\subset I$ and $\phi_\varphi={\phi^{*}_\varphi}|_{I}$ where $\phi^{*}_\varphi:I^{*}\to A$ is a maximal solution to the left of the system (\ref{eq1}) and $t_0\in I^{*}$. Consequently 
$$
\lim_{t\to t_0} \phi (t)=\lim_{t\to t_0}\varphi^{-1}(\phi^{*}_{\varphi} (t))=\lim_{t\to t_0}\varphi^{-1}(y_0)=x_0,
$$
which completes the proof of Lemma \ref{lem.oprzedlrozw}.
\end{proof}

\begin{lemma}\label{lem.uciekanie}

Let  $\phi :(\alpha,\beta )\to M$ be a  maximal solution of the system
 $x'=W(x)$. For every compact set $K\subset \er\times M$ there exist $\alpha^{*},\beta^{*}\in\er$ such that the graphs of $\phi|_{(\alpha,\alpha^{*})}$ and $\phi|_{(\beta^{*},\beta)}$ are disjoint with $K$.

\end{lemma}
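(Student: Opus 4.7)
The plan is to argue by contradiction, treating the two ends symmetrically, and reduce to Lemma~\ref{lem.oprzedlrozw}. I will write out the argument for the left endpoint $\alpha$; the right endpoint $\beta$ is identical.

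Suppose no $\alpha^{*}$ as in the statement exists. Then for every open neighborhood of $\alpha$ inside $(\alpha,\beta)$ the graph of the restriction of $\phi$ to it meets $K$. Concretely, shrink the neighborhood through a sequence: if $\alpha>-\infty$, for each $n\in\en$ pick $t_n\in(\alpha,\alpha+1/n)$ with $(t_n,\phi(t_n))\in K$; if $\alpha=-\infty$, pick $t_n<-n$ with the same property. In the second case, the projection of $K$ onto the $\er$-factor is a compact subset of $\er$, hence bounded, which directly contradicts $t_n\to -\infty$. So we may assume $\alpha$ is finite and $t_n\to\alpha$.

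Now use compactness of $K$ once more: the sequence $(t_n,\phi(t_n))\subset K$ has a convergent subsequence, whose limit must be of the form $(\alpha,x_0)$ with $x_0\in M$, since the first coordinates converge to $\alpha$. Replacing $(t_n)$ by this subsequence, we have $t_n\to\alpha$ and $\phi(t_n)\to x_0\in M$. Lemma~\ref{lem.oprzedlrozw} applies verbatim (with $t_0=\alpha$) and yields that $\lim_{t\to\alpha}\phi(t)=x_0$ and that $\phi$ is not maximal to the left, contradicting the assumption that $\phi:(\alpha,\beta)\to M$ is a maximal solution.

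An analogous argument on the other side supplies $\beta^{*}$: assuming no $\beta^{*}$ works, we obtain $s_n\to\beta$ with $(s_n,\phi(s_n))\in K$, again handling $\beta=+\infty$ by the boundedness of the $\er$-projection of $K$; extracting a subsequence with $\phi(s_n)\to y_0\in M$ and applying the right-sided version of Lemma~\ref{lem.oprzedlrozw} (obtained by reversing time via the vector field $-W$) contradicts maximality to the right.

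The only point that requires some care is the dichotomy between finite and infinite endpoints: the finite case is exactly what Lemma~\ref{lem.oprzedlrozw} is built to handle, and the infinite case is actually easier because compactness of $K$ in $\er\times M$ prevents the $t$-coordinate from escaping to $\pm\infty$ at all. There is no genuine obstacle; the argument is simply the standard ``maximal solutions leave every compact set'' reasoning packaged through Lemma~\ref{lem.oprzedlrozw}.
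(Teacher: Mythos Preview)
Your argument is correct and follows essentially the same route as the paper: both reduce to Lemma~\ref{lem.oprzedlrozw} by extracting from compactness of $K$ a sequence $(t_k,\phi(t_k))\to(\alpha,x_0)\in\er\times M$ (respectively at $\beta$), contradicting maximality. The paper packages this by setting $A=\{t:(t,\phi(t))\in K\}$ and taking $\alpha^{*}=\inf A$, $\beta^{*}=\sup A$, whereas you argue directly by contradiction and are slightly more explicit about the infinite-endpoint case; there is no substantive difference.
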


\begin{proof}

Let $A:=\{ t\in (\alpha , \beta) |\; (t,\phi (t))\in K\}$. If $A=\emptyset$ then as  $\alpha^{*} ,\beta^{*}$ we may choose arbitrary numbers from $(\alpha, \beta)$. Now assume that $A\neq \emptyset$ and let $\alpha^{*}=\inf A$, $\beta^{*}=\sup A$. Observe that  $\alpha <\alpha^{*}$ and $\beta^{*}<\beta$. Indeed, if $\alpha =\alpha^{*}$ than there exists a sequence $(t_k)_{k=1}^\infty$ such that $(t_k,\phi(t_k))$ converges to a poin in $K\subset \er\times M$. This contradicts Lemma \ref{lem.oprzedlrozw}. Analogously we prove  that $\beta^{*}<\beta$. 
\end{proof}

\section{The Malgrange condition on Manifolds}

Let $f,g\in C^{\infty} (\er^n)$. We will assume that $ \grad g(x)\neq 0$ for $x\in g^{-1} (0)$. Denote $M:=V(g)=g^{-1} (0) \text { and } f_M :=f|_{M}$.  %, T_x M=\{w\in\er^n , <\grad g(x), w>=0\}$ for $x\in M$, $TM=\cup_{x\in M} T_x M$. 
Consider the following vector field $\grad f_M: M\to \er^n$
\begin{equation}\label{def.grad f_M}
\grad f_M(x):=\grad f(x)-\dfrac{\langle\grad f(x),\grad g(x)\rangle}{||\grad g(x)||^{2}}\grad g(x),\quad x\in M.
\end{equation}
Geometrically  $\grad f_M(x)$ is the projection of $\grad f(x)$ onto the tangent space $T_x M$. 

A value $\lambda\in \er$ is called a \emph{regular value} of  $f_M$ if $\grad f_M(x)\neq 0$ for $x\in f_M ^{-1} (\lambda).$
The set of all values $\lambda$ that are not regular we will denote by $K_0(f_M)$.

We say that $f_M$ satisfies the \emph{Malgrange condition} %[on $M$] (?)%
 over $U\subset \er$  if there exist  constans $\delta,R >0$ such that
 \begin{equation*}\label{war.Malg}
 \forall_{x\in f_{M}^{-1} (U)} \quad \|\grad f_M(x) \| \|x\| \geqslant \delta \quad \text{for} \quad \|x\|\geqslant R.
\end{equation*}

We say that $f_M$ satisfies the \emph{Malgrange condition} %[on $M$] (?)%
in $\lambda\in\er$ if there exists a neighborhood $U$ of $\lambda$ such that $f_M$ satisfies the Malgrange condition over $U$.
We will denote by $K_{\infty} (f_M)$ the set of all $\lambda$ that do not satisfy the Malgrange condition.

It is well known that the Malgrange condition allows us to integrate $\grad f_M(x)/  \|\grad f_M(x)\|^{2} $ field and get the trivialization of $f_M$ (see \cite{Ra} and \cite{Je}). More precisely, we have

\begin{theorem}\label{tw.trywializacja przy war.M}
Let %$f,g\in C^{\infty} (\er^n)$, $M=g^{-1}(0)$, $\grad g(x)\neq 0$ for $x\in M$ and
 $\lambda\in\er$ be a regular value of $f_M$. If $f_M$ satisfies the Malgrange condition in $\lambda\in\er$ than there exists a neighborhood $U$ of $\lambda$ such that ${f_M}_{|f_M ^{-1}(U)}$
is a $C^{\infty}$ trivial fibration  on $M$. 
\end{theorem}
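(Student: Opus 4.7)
The plan is to construct a smooth vector field $W$ on $f_M^{-1}(U)$ for an open interval $U\ni\lambda$, whose integral curves stay on $M$ and realize the trivialization by moving points between fibers of $f_M$. Since $\lambda$ is a regular value of $f_M$ and the Malgrange condition holds in $\lambda$, I can shrink to an open interval $U\ni\lambda$ and obtain constants $\delta,R>0$ with $\grad f_M\neq 0$ on $f_M^{-1}(U)$ and $\|x\|\,\|\grad f_M(x)\|\geq\delta$ whenever $x\in f_M^{-1}(U)$ and $\|x\|\geq R$. On $f_M^{-1}(U)$ define
\[
W(x) := \frac{\grad f_M(x)}{\|\grad f_M(x)\|^{2}}.
\]
Because $\grad f_M(x)\in T_x M$, the field $W$ is tangent to $M$, so its integral curves remain on $M$; moreover $(f_M\circ\phi)'(t)=\langle\grad f_M(\phi(t)),W(\phi(t))\rangle=1$, so $f_M$ evolves linearly along every trajectory of $W$.

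The main step---and the step I expect to be the principal obstacle---is to show that for every $x_0\in f_M^{-1}(\lambda)$ the maximal integral curve $\phi_{x_0}$ with $\phi_{x_0}(0)=x_0$ is defined on the whole interval $U-\lambda$. Because $f_M(\phi_{x_0}(t))=\lambda+t$, it suffices to rule out finite-time escape to infinity while $t\in U-\lambda$. For this I would compute
\[
\frac{d}{dt}\|\phi_{x_0}(t)\|^{2} \;=\; 2\bigl\langle\phi_{x_0}(t),\,W(\phi_{x_0}(t))\bigr\rangle \;\leq\; \frac{2\|\phi_{x_0}(t)\|}{\|\grad f_M(\phi_{x_0}(t))\|},
\]
and then use the Malgrange bound $1/\|\grad f_M(x)\|\leq\|x\|/\delta$ (valid for $\|x\|\geq R$) to conclude $\frac{d}{dt}\|\phi_{x_0}(t)\|^{2}\leq\frac{2}{\delta}\|\phi_{x_0}(t)\|^{2}$ in the regime $\|\phi_{x_0}(t)\|\geq R$. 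A Gronwall-type argument then forces $\|\phi_{x_0}(t)\|$ to grow at most exponentially on every bounded time interval. Combined with Lemma \ref{lem.uciekanie}, applied to compact sets of the form $[a,b]\times K$ with $K\subset M$ a closed bounded (hence compact, since $M$ is closed in $\er^n$) subset, this excludes the trajectory from ending strictly inside $U-\lambda$: if it did, Lemma \ref{lem.uciekanie} would force $(t,\phi_{x_0}(t))$ to leave every such compact set near the endpoint, contradicting the exponential bound.

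With completeness of the flow on $f_M^{-1}(U)$ in hand, I would define
\[
\Phi : U\times f_M^{-1}(\lambda)\to f_M^{-1}(U), \qquad \Phi(s,x_0) := \phi_{x_0}(s-\lambda).
\]
Smooth dependence of ODE solutions on initial conditions makes $\Phi$ a smooth map, and the same completeness applied in reverse time makes $y\mapsto\bigl(f_M(y),\,\phi_y(\lambda-f_M(y))\bigr)$ a smooth inverse of $\Phi$. Since $f_M\circ\Phi$ is the projection onto the first factor, $\Phi$ is the desired $C^{\infty}$ trivialization of $f_M$ over $U$.
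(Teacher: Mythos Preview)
The paper does not supply its own proof of this theorem; it declares the result well known, cites \cite{Ra} and \cite{Je}, and explicitly names $\grad f_M/\|\grad f_M\|^{2}$ as the vector field one integrates. Your proposal follows precisely this indicated route, and the Gronwall-type bound combined with Lemma~\ref{lem.uciekanie} that you outline is exactly the mechanism the paper spells out in full when proving the closely analogous Theorem~\ref{tw.tryw foliacji} (there the estimate is written in the logarithmic form $\rho(t)=\tfrac12\ln\|\varphi_x(t)\|^{2}$, equivalent to your differential inequality for $\|\phi_{x_0}(t)\|^{2}$).
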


Immediately from Theorem \ref{tw.trywializacja przy war.M} we get

\begin{remark} 
$B(f_M)\subset K_{\infty}(f_M)\cup K_0 (f_M)$.
\end{remark}

%Note that fulfillment of the Malgrange condition by the function $f_M$ does not guarantee the fulfillment of this condition by the function $f|_{M^\varepsilon}$,
Note that if $f_M$ satisfies the Malgrange condition, it does not follow that $f|_{M^\varepsilon}$ satisfies it also, 
 where $M^\varepsilon=g^{-1}((-\varepsilon,\varepsilon))$. In other words, one can not trivialise $f_M$ using field $\grad f_M $ in a neighbourhood $M^\varepsilon$ % by its trivialization on a neighbourhood $M^\epsilon$
 of $M$ as it is shown by the following example.

\begin{example}\label{ex1}%\textbf{Example.}
%\newline
Let $f,g\in\er[x,y]$ be polynomials defined as $g(x,y):=y$, $f(x,y):=x-x^3 y^2$. Put $M=g^{-1}(0)$. Obviously  $\grad f_M=[1,0]$ and $f$ satisfies Malgrange condition on $M$. Consequently $f_M$ is a trivial fibration on $M$. On the other hand, for any $\varepsilon>0$, denoting  $ M_{t}:=g^{-1}(t) $ for $t\in (-\varepsilon,\varepsilon)$  we have $\grad f_{M_{g(x,y)}}(x,y)=(1-3x^2 y^2,0)$ for $(x,y)\in M^{\varepsilon}$. Moreover, for
 $$
 (x_n,y_n)=\left(\dfrac{n\varepsilon}{2},\dfrac{2}{\sqrt{3}n\varepsilon}\right),\quad n\in \en
 $$
we have $\|\grad f_{M_{g(x,y)}}(x_n,y_n)\|=0$ for $n \in \en$ and $\lim_{n\to\infty} \|(x_n,y_n)\|=\infty$, and $(x_n,y_n)\in M^\varepsilon$ as $n\to\infty$. So we can not use the trivialization method without restricting the field $\grad f_M$ to the manifold $g^{-1}(0)$. 
\end{example}
\section{The Malgrange condition on fibers}

In this section we will present a method of using fibers of some function $g$ to construct a trivialisation of a function $f$.

Let $ f,g\in C^{\infty} (\er^n)$. Let $D\subset \er^n$ be an open set and $\grad g(x)\neq 0$ for $x\in D$.
 %Let $S$ be an open subset of $\er$ and let $\grad g(x)\neq 0$ for $x\in g^{-1} (S)$. Denote $D_s:=g^{-1} (s)$ and $D:=\bigcup_{s\in S} D_s=g^{-1}(S) $. Since $S$ is an open subset of $\er$ then $D$ is open in $\er^n$.
As in (\ref{def.grad f_M}) we can define $\grad f_{g^{-1}(g(x))}(x)$ for $x\in D$. Put
 $$
\grad_g f(x):=\grad f_{g^{-1}(g(x))}(x)\quad \hbox{for }x\in D.
$$
 
%Consider a vector field $\grad_g f:D \to \er^n$ defined by 
%\begin{equation}
%\grad_g f(x):=\grad f(x)-\dfrac{\langle\grad f(x),\grad g(x)\rangle}{||\grad g(x)||^{2}}\grad g(x) \qquad \text{ for } x\in D.
%\end{equation}
Geometrically  $\grad_g f(x)$ is the projection of $\grad f(x)$
%onto the tangent space $T_x D_{g(x)}$, i.e., 
onto the tangent space at $x$ of the fibre $g^{-1}(g(x))$.

%We called $\lambda\in\er$  a \textit{$g$-regular} value of  $f$ if $\grad_g f(x)\neq 0$ for $x\in f ^{-1} (\lambda).$
%\textbf{brak formalnie zbioru D}

Denote $\Xi=\{(g,S) |\; g\in C^{\infty}(\er^n), S \text{ open in } \er \}$ and let $(g,S)\in \Xi$.
We say that $f$ satisfies the \textit{$(g,S)$-Malgrange condition} %[on $M$] (?)%
 over $U$  if there exists a constant $R >0$ such that 
 \begin{enumerate}
  \item $\grad g(x)\neq 0$ on $D_{(U,R)}:=f^{-1}(U)\setminus \{x\in \er^n|\;R\geqslant\|x\|\}$
  \item $D_{(U,R)}\subset g^{-1}(S)$
  \item $
 \forall_{s\in S}\;\exists_{\delta_s >0}\; \|\grad_g f(x) \| \|x\| \geqslant \delta_s \quad \text{for} \quad x\in D_{(U,R)}\cap g^{-1} (s).
 $
\end{enumerate}

We say that $f$ satisfies the \textit{$(g,S)$-Malgrange condition} 
in $\lambda\in\er$ if there exists neighborhood $U$ of $\lambda$ such that  $f$ satisfies the $(g,S)$-Malgrange condition over $U$. We will denote by $K_{\infty} ^{(g,S)}(f)$ the set of all $\lambda$ that don't satisfy the $(g,S)$-Malgrange condition.

The main result of this section is the following

\begin{theorem}\label{tw.tryw foliacji}
Let $\lambda$ be a regular value of $f$. If $f$ satisfies the $(g,S)$-Malgrange condition in $\lambda\in\er$ than there exists a neighborhood $U$ of $\lambda$ such that ${f}|_{f ^{-1}(U)}$
is a trivial fibration. 
\end{theorem}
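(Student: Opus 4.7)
\emph{Strategy.} The plan is to integrate a suitable vector field $V$ on $f^{-1}(U)$, for a small open $U\ni\lambda$, satisfying $\langle \grad f, V\rangle\equiv 1$; its flow then trivialises $f$ over $U$ in the style of Theorem \ref{tw.trywializacja przy war.M}. The difficulty compared to the classical case is that condition (3) of the $(g,S)$-Malgrange definition supplies only a fibrewise lower bound $\delta_s$ that may degenerate as $s$ varies, so $\grad_g f/\|\grad_g f\|^2$ is not usable as a global trivialising field on an ambient neighbourhood. The remedy is to force $V$ to be tangent to the fibres of $g$ outside a large ball, so that each integral curve is trapped on a single $g$-fibre and sees a fixed constant $\delta_{s}$.

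\emph{Construction of $V$.} First I would pick, by hypothesis, an open $U_0\ni\lambda$ and $R>0$ realising (1)--(3); then, using the regularity of $\lambda$ and continuity, shrink $U_0$ so that $\grad f\neq 0$ on the compact set $f^{-1}(U_0)\cap\overline{B(0,R+2)}$. With a smooth cutoff $\chi:\ern\to[0,1]$ equal to $1$ on $\overline{B(0,R+1)}$ and $0$ off $B(0,R+2)$, set
\[
V := \chi\,\frac{\grad f}{\|\grad f\|^2}\;+\;(1-\chi)\,\frac{\grad_g f}{\|\grad_g f\|^2}\qquad\text{on }f^{-1}(U_0).
\]
Well-definedness uses $\grad f\neq 0$ where $\chi\neq 0$, and $\grad g\neq 0$, $\grad_g f\neq 0$ where $1-\chi\neq 0$ (by (1) and (3)). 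The identities $\langle \grad f,\grad_g f\rangle = \|\grad_g f\|^2$ and $\langle \grad g,\grad_g f\rangle = 0$, both immediate from (\ref{def.grad f_M}), give $\langle \grad f,V\rangle\equiv 1$ and tangency of the second summand to $g$-fibres.

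\emph{Completeness of the flow --- the main difficulty.} For $x\in f^{-1}(\lambda)$ let $\phi_x:I_x\to\ern$ be the maximal integral curve of $V$ through $x$; I would show $(-\epsilon,\epsilon)\subset I_x$ for every such $x$, where $(\lambda-\epsilon,\lambda+\epsilon)\subset U_0$. Arguing by contradiction, suppose $t_* := \sup(I_x\cap[0,\epsilon))<\epsilon$. Because $f(\phi_x(t))=\lambda+t$ the curve stays in $f^{-1}(U_0)$, but by Lemma \ref{lem.uciekanie} its graph must eventually leave every compact subset of $\er\times\ern$, forcing $\|\phi_x(t)\|\to\infty$ as $t\nearrow t_*$. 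Pick the last $t_1\in[0,t_*)$ with $\|\phi_x(t_1)\|=R+2$; on $(t_1,t_*)$ the field equals $\grad_g f/\|\grad_g f\|^2$ and is tangent to $g$-fibres, so $g\circ\phi_x\equiv s_* := g(\phi_x(t_1))\in S$. Condition (3) then yields $\|V(\phi_x(t))\|\leq \|\phi_x(t)\|/\delta_{s_*}$, and Gronwall's inequality gives $\|\phi_x(t)\|\leq (R+2)\,e^{(t-t_1)/\delta_{s_*}}$, bounded on $[t_1,t_*)$, a contradiction. The case $t<0$ is symmetric.

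\emph{Trivialization.} Setting $U := (\lambda-\epsilon,\lambda+\epsilon)$, smooth dependence of the flow on initial data makes
\[
\Phi: U\times f^{-1}(\lambda)\to f^{-1}(U),\qquad \Phi(s,x):=\phi_x(s-\lambda),
\]
a smooth map with $f\circ\Phi=\mathrm{pr}_1$ and smooth inverse $y\mapsto(f(y),\phi_y(\lambda-f(y)))$. Hence $\Phi$ is a diffeomorphism and $f|_{f^{-1}(U)}$ is a trivial $C^\infty$ fibration.
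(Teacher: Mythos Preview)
Your proof is correct and follows essentially the same strategy as the paper: glue $\grad f/\|\grad f\|^2$ inside a ball to $\grad_g f/\|\grad_g f\|^2$ outside via a cutoff to obtain a field with $\langle\grad f,V\rangle\equiv1$ that is tangent to $g$-fibres at infinity, then show completeness by observing that once an integral curve escapes the ball it is trapped on a single fibre $g^{-1}(s_*)$ where the fixed constant $\delta_{s_*}$ yields a Gronwall bound, contradicting Lemma~\ref{lem.uciekanie}. The only cosmetic differences are that the paper normalises after combining (writing $u=w/\langle w,\grad f\rangle$ with $w=\alpha\,\grad f+\beta\,\grad_g f$) rather than before, and phrases the growth estimate via $\rho(t)=\tfrac12\ln\|\varphi_x(t)\|^2$ and the mean value theorem instead of Gronwall; one small wording point is that your ``last $t_1$ with $\|\phi_x(t_1)\|=R+2$'' may fail to exist when the curve starts outside $\overline{B(0,R+2)}$, but then taking $t_1=0$ gives the same bound.
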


The proof of the above theorem will be preceded by two properties and a lemma. %some technical properties.

From now we will assume that $\lambda\in \er$ is a regular value of $f$ and that $f$ satisfies the $(g,S)$-Malgrange condition in $\lambda $. 

The following property holds

%As in Section 1, 
%The following property holds

\begin{property}\label{lem. o braku osobliwosci 2}
%$f\in C^{\infty}(\er^n)$ and let 
There exists a  neighborhood $U$ of $\lambda$ such that $\grad f(x)\neq 0$ for $x\in f ^{-1} (U)$.
\end{property}

Let  $U,R$ be as in the $(g,S)$-Malgrange condition. Shrinking the set $U$ if necessary, we can assume that $\grad f(x)\neq 0$ for $x\in f ^{-1} (U)$. Let $\alpha,\beta $ be $C^{\infty}$ functions in $\er^n$ such that 

$$
\alpha(x) = \left\{ \begin{array}{ll}
0 & \textrm{for $\|x\| \geqslant R+1$}\\
1 & \textrm{for $\|x\| \leqslant R $}
\end{array} \right.
$$

$$
\beta(x) = \left\{ \begin{array}{ll}
1 & \textrm{for $\|x\| \geqslant R+1$}\\
0 & \textrm{for $\|x\| \leqslant R $}
\end{array} \right.
$$
and $0< \alpha(x),\beta (x)< 1$ for $\|x\|\in (R,R+1)$.

 We define a smooth vector field $w: f ^{-1} (U) \to \er^n$ as
$$w(x):=\alpha(x)\grad f(x)+\beta(x)\grad_g f(x).$$
Here we are using convention that $\beta(x)\grad_g f(x)=0$ for $\|x\| \leqslant R $ (note that $\grad_g f(x)$ might not be defined for some points $x$ such that $\|x\|\le R$).

From the definition of $w$ and Property \ref{lem. o braku osobliwosci 2} we get 
\begin{property} \label{prop. pola w}
Under the above assumptions we have
\begin{enumerate}
\item[(i)] $w(x)=\grad f(x)$ for $\|x\|\leqslant R$ and $w(x)=\grad_g f(x)$ for $\|x\|\geqslant R+1$ 
\item[(ii)] $\langle w(x),\grad f(x) \rangle\neq 0$ for $x \in f ^{-1} (U).$
\end{enumerate}
\end{property}

Let us define $u:f ^{-1} (U)\to\er^n$ as 
$$u(x):=\dfrac{w(x)}{\langle w(x),\grad f(x) \rangle}.$$
From the assumptions and Property \ref{prop. pola w} we see that  $u$ is well defined, it is smooth and $u(x)\neq 0$ for $x\in f^{-1}(U)$.
\begin{lemma}\label{lem. wlasnosci u}
Under the above assumptions we have
\begin{enumerate}

  \item[(i)] $\langle\grad f(x),u(x)\rangle=1$ for $x\in f^{-1}(U)$
  \item[(ii)] For any $s\in S$ there exists a constant $\alpha_s >0$ such that 
  $$|\langle x,u(x)\rangle|\leqslant \varepsilon_s \|x\|^{2} \qquad \text{ for } x\in f^{-1}(U)\cap g^{-1}(s),\|x\|>R+1.$$
\end{enumerate}
\end{lemma}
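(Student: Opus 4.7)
The two statements are both straightforward computational consequences of the construction of $u$, so my plan is direct verification rather than anything sophisticated.

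For part (i), I would just substitute the definition of $u$: since
$$u(x) = \frac{w(x)}{\langle w(x), \grad f(x)\rangle},$$
and this denominator is nonzero on $f^{-1}(U)$ by Property \ref{prop. pola w}(ii), linearity of the inner product gives $\langle \grad f(x), u(x)\rangle = 1$ on the nose.

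For part (ii), the main simplification is that when $\|x\| > R+1$ we have $\alpha(x) = 0$ and $\beta(x) = 1$, so Property \ref{prop. pola w}(i) yields $w(x) = \grad_g f(x)$. The next step is the (well-known) identity
$$\langle \grad_g f(x), \grad f(x)\rangle = \|\grad_g f(x)\|^{2},$$
which comes immediately from the definition of $\grad_g f$ in (\ref{def.grad f_M}): the correction term $\frac{\langle \grad f, \grad g\rangle}{\|\grad g\|^{2}}\grad g$ is orthogonal to $\grad_g f$, so pairing $\grad_g f$ against $\grad f = \grad_g f + (\text{orthogonal term})$ reduces to $\|\grad_g f\|^{2}$. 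Consequently on the region $\|x\|>R+1$ we obtain the clean expression
$$u(x) = \frac{\grad_g f(x)}{\|\grad_g f(x)\|^{2}}.$$

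With $u$ in this form, the estimate is one line of Cauchy--Schwarz followed by one application of condition (3) in the definition of the $(g,S)$-Malgrange condition. Indeed, for $x\in f^{-1}(U)\cap g^{-1}(s)$ with $\|x\|>R+1$ one has $x\in D_{(U,R)}\cap g^{-1}(s)$, hence $\|\grad_g f(x)\|\,\|x\|\geqslant \delta_s$, and
$$|\langle x, u(x)\rangle| \;\leqslant\; \frac{\|x\|\,\|\grad_g f(x)\|}{\|\grad_g f(x)\|^{2}} \;=\; \frac{\|x\|}{\|\grad_g f(x)\|} \;\leqslant\; \frac{\|x\|^{2}}{\delta_s},$$
so the constant in (ii) can be taken to be $1/\delta_s$. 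There is essentially no obstacle; the only thing to be careful about is to verify that $\|x\|>R+1$ really does put us in the regime where the cutoffs make $w$ coincide with $\grad_g f$ (so that the orthogonal-decomposition identity is available) and simultaneously in $D_{(U,R)}$ so that the $(g,S)$-Malgrange inequality applies.
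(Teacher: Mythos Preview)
Your argument is correct and coincides with the paper's own proof: both dismiss (i) as immediate from the definition of $u$, and for (ii) both observe that for $\|x\|>R+1$ one has $w(x)=\grad_g f(x)$ and $\langle\grad_g f(x),\grad f(x)\rangle=\|\grad_g f(x)\|^2$, then apply Cauchy--Schwarz together with the inequality $\|\grad_g f(x)\|\,\|x\|\geqslant\delta_s$ to obtain the constant $1/\delta_s$. Your write-up is in fact a bit more explicit than the paper's about why $u(x)=\grad_g f(x)/\|\grad_g f(x)\|^2$ in this region, which is helpful.
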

\begin{proof} (i) is obvious. We will prove (ii).  From the $(g,S)$-Malgrange condition we have $\dfrac{1}{\|\grad_g f \|}\leqslant \dfrac{1}{\delta_s} \|x\|$\quad for 
  $x\in f^{-1}(U)\cap g^{-1}(s)$, $\|x\|>R$. Since $\langle \grad_g f(x),\grad f(x)\rangle=\|\grad_g f(x)\|^2$, using Schwartz inequality we get   
   $$|\langle x,u(x) \rangle|\leqslant \|x\| \|u(x)\|=\|x\| \dfrac{1}{\|\grad_g f (x)\|}\leqslant \dfrac{1}{\delta_s} \|x\|^2 \text{ for } \|x\|>R+1,$$    
 which completes the proof.   
%\begin{enumerate}
 % \item[(i)] obvious
 % \item[(ii)] From Malgrange's condition we have $\dfrac{1}{\|\grad_g f \|}\leqslant \dfrac{1}{\delta_s} \|x\|$\quad for 
 % $x\in f^{-1}(U)\cap D_s,\|x\|>R$. Since $\langle \grad_g f(x),\grad f(x)\rangle=\|\grad_g f(x)\|^2$ than using Schwartz inequality we get   
 %  $$|<x,u(x)>|\leqslant \|x\| \|u(x)\|=\|x\| \dfrac{1}{\|\grad_g f (x)\|}\leqslant \dfrac{1}{\delta_s} \|x\|^2 \text{ for } \|x\|>R+1.$$     
%\end{enumerate}
\end{proof}
%\begin{proof} The same as in Section 2

We are ready to prove Theorem \ref{tw.tryw foliacji}.

\begin{proof}
Let $U,R,u$ be defined as above.
For each $\mu \in U$, consider a system of differential equations
\begin{equation}\label{system mu3}
x'= (\lambda - \mu)u(x)
\end{equation}
with the right side defined in %the  %%open manifold???set 
$G:=\{(t,x)\in \er\times D|\; x\in f^{-1}(U)\}.$
\linebreak
Denote by $\Phi_\mu : V_\mu \to D $ the general solution of system (\ref{system mu3}) and put
\linebreak $V_\mu :=\{ (\tau,\eta,t) \in \er\times D\times R|\; (\tau,\eta) \in G , t\in I_\mu (\tau,\eta)\}$, where $I_\mu (\tau,\eta)$ is a domain of integral solution of $ t\to\Phi_\mu (\tau,\eta,t)$. From the definition of the general solution  we get 
\begin{equation}\label{eq. war poczatkowy}
\Phi_\mu (\tau,\eta,\tau)=\eta.
\end{equation}
Consider the mapping 
$$ \Psi_1: f^{-1}(U) \ni x \mapsto \Phi_{f(x)} (0,x,1)\in f^{-1} (\lambda).$$

We show that the mapping $ \Psi_1$ is well defined i.e. $ 1\in I_{f(x)} (0,x)$ for each $x\in f^{-1}(U)$. 
Suppose the contrary that there exists $x\in f^{-1}(U)$ such that $1\notin I_{f(x)} (0,x)$. Then the right end-point $\beta$   of the interval $I_{f(x)} (0,x)$ satisfies $0<\beta \leqslant 1$.  Let $\varphi_x$ be an integral solution of the system (\ref{system mu3}) with $\mu = f(x)$ satisfying the initial condition $\varphi_x (0)=x$, that is
\begin{equation}\label{eq. war pocz varphi2}
\varphi (t)= \Phi_{f(x)} (0,x,t) \quad \text{ for } t\in I_{f(x)} (0,x).
\end{equation}
We have
 $$ 
 (f \circ \varphi_x )'(t)= \langle \grad f (\varphi _x (t) ), \varphi _x '(t) \rangle=
% $$(\lambda-f(x))\langle \grad f (\varphi _x (t)),u(\varphi_x (t))\rangle=
 \lambda-f(x)\quad \text{for} \quad t \in I_{f(x)} (0,x).$$
Therefore
\begin{equation}\label{eq. (*)}
f \circ \varphi_x (t)=(\lambda-f(x))t+f(x),\quad t \in I_{f(x)} (0,x)
\end{equation}
and $f\circ \varphi_x (t)\in J$ for $t\in [0,\beta)$, where $J$ is a closed interval with endpoints $\lambda$ and $f(x).$

Denote 
$$
K:=\{(t,x')\in \er\times f ^{-1}(U) |\; t\in [0,1], f (x') \in J, \|x'\|\leqslant R+1\}
.$$
 Obviously $K$ is a compact set.   Lemma \ref{lem.uciekanie} implies that %tutaj integralne prawo tylko
there exists $\tau \in (0,\beta)$ such that $(t,\varphi _x (t)) \notin K$ for $t\in [\tau,\beta)$.  %. Given $f\circ \varphi_x (t)\in J$ and $\beta\in (0,1]$ we have
Since $J\subset U$, we have $\|\varphi _x (t)\| >R+1$ for $t\in [\tau,\beta)$.

Consider a function $\varrho :[\tau,\beta)\to\er$ defined by% the formula
$$\varrho (t):=\dfrac{1}{2} \text{ln} \|\varphi _x (t)\|^{2}.$$
Let $s_0\in S$ be such that $\varphi _x (\tau)\in D_{s_0}$. Using Lemma \ref{lem. wlasnosci u} (ii) we get
$$|\varrho '(t)|=\dfrac{|\langle \varphi _x (t),\varphi_x '(t)\rangle|}{\|\varphi _x (t)\|^{2}}=\dfrac{|\lambda-f(x)|}{\|\varphi _x (t)\|^{2}}|\langle \varphi_x (t),u(\varphi_x (t))\rangle|\leqslant \varepsilon_{s_0}|\lambda-f(x)|$$
for $t\in (\tau,\beta)$. %napewno otwarty?
From the mean value theorem there exists $\theta_t \in (\tau,t)$ such that $\varrho (t)- \varrho(\tau)=\varrho '(\theta_t) (t-\tau)$ and therefore, by the above,
$$\varrho (t) \leqslant \varrho(\tau)+\varepsilon_{s_0}|\lambda-f(x)|(t-\tau)\leqslant \varrho(\tau)+\varepsilon_{s_0}|\lambda-f(x)|(\beta-\tau).
$$
Denoting $L:=\varrho(\tau)+\varepsilon_{s_0}|\lambda-f(x)|(\beta-\tau)$ 
we see that the solution ${\varphi_x} _{|(\tau,\beta)}$ is
contained in the compact set 
$$
\{(t,x')\in \er\times f^{-1}(U) |\; f(x')\in J, \|x'\|\leqslant \text{e} ^{L} \}\subset \er\times f^{-1}(U)
,$$
 which contradicts Lemma \ref{lem.uciekanie}. % (here we are even on te open subset of $\er^n$).

Summing up we have shown that $1\in I_{f(x)} (0,x)$ for every $x\in f^{-1}(U)$. Then from (\ref{eq. (*)}) we get  $f(\Psi_1(x))=f(\varphi_x(1))=\lambda$ and the mapping $\Psi_1$ is defined correctly.
  Similarly as above we show that the mapping 
  $$\Theta: f ^{-1} (\lambda))\times U \ni (\xi,\mu)\mapsto \Phi_\mu (1,\xi,0)\in f^{-1}(U)$$
is also well defined. It is easy to check that the mapping  
$$\Psi: f^{-1}(U) \ni x \mapsto (\Psi_1 (x),f(x))\in f ^{-1} (\lambda)\times U$$
is a $C^{\infty}$ diffeomorphism and $\Psi^{-1} =\Theta$. Therefore $f|_{f^{-1}(U)}$
is a  trivial fibration. 
\end{proof}
Immediately from Theorem \ref{tw.tryw foliacji} we get

\begin{remark} \label{remark (g,S)-Mal}
For every $(g,S)\in \Xi$ we have $B(f)\subset K_{\infty} ^{(g,S)}(f)\cup K_0 (f)$. Therefore $
B(f)\subset \bigcap_{(g,S)\in \Xi} K_{\infty} ^{(g,S)}\cup K_0 (f)
.$

\end{remark}
%\textbf{Examples.}
In Theorem~~\ref{tw.tryw foliacji} we showed how to trivialize a function $f$ using fibers of some function $g$. Now we give some examples where the assumptions of Theorem 1 are not met but using Theorem~~\ref{tw.tryw foliacji} we can deduce the triviality of $f$.

\begin{example}\label{exa2new}  % \textbf{1.} 
Let $f:\er^2\to \er$
$$ f(x,y):=\dfrac{y}{1+x^{2}}.$$
 It is easy to check that $f$ does not satisfy the Malgrange condition in 0. Denote $g:\er^2\to\er$
 $$ g(x,y):=x,\quad S=\er.$$
 Put $U=\er$ and $R=1$. Then
 \begin{enumerate}
  \item $\grad g(x)=[1,0]\neq 0$ for $D_{(U,R)}=\er^2\setminus \{x\in \er^2|\;R\geqslant\|x\|\}$
  \item $D_{(U,R)}\subset g^{-1}(\er)$
  \item for $s\in \er$ we have
$$\|\grad_g f(x,y) \|=\|[0,\dfrac{1}{1+x^2} ]\|= \dfrac{1}{1+s^2} \qquad \text{ for } (x,y)\in D_{(U,R)}\cap g^{-1}(s) .$$
\end{enumerate}
  
Therefore $f$ satisfies the $(g,S)$-Malgrange condition %(or even stronger $D$-Freduryk's type) 
and using Theorem~~\ref{tw.tryw foliacji} we deduce that $f$ is a trivial fibration.
\end{example}

An example of polynomials which is a trivial fibration but does not satisfy the Malgrange condition comes from L. P\u aunescu and A. Zaharia (see \cite{PZ}).

\begin{example}\label{exa3new} 
%\textbf{2.} 
Let $p,g\in\en,f:\er^3\to\er$
$$f(x,y,z):=x-3x^{2p+1}y^{2q}+2x^{3p+1}y^{3q}+yz.$$
L. P\u aunescu and A. Zaharia showed that after a suitable polynomial change of coordinates, we  can write $f(X,y,Z)=X$.
 Therefore $f$ is a trivial fibration. Following their reasoning, we can deduce that if $p>q$ then $f$ does not satisfy the Malgrange condition in 0.
 % They also prove that Łojasiewicz number at infinity $\LL(\nabla f)$ is  equal $-\dfrac{p}{q}$ in complex case. Łojasiewicz number at infinity is achieved on real curve $t\mapsto (t^{-q},t^p,0)$ so $\LL(\nabla f)=-\dfrac{p}{q}$ in real case as well. Then $\LL(\nabla f)<-1$ if $p>q$ and $f(t^{-q},t^p,0)=0$. It follows that $f$ does not satisfy the Malgrange's condition in 0.\\
Let 
 $$ g(x,y,z):=y \quad\text{ for } (x,y,z)\in \er^3, S=\er.$$
Put $U=\er,R=1$. Then
\begin{enumerate}
  \item $\grad g(x,y,z) =[0,1,0]\neq 0 $ for $D_{(U,R)}=\er^3\setminus \{x\in \er^3|\;R\geqslant\|x\|\}$
  \item $D_{(U,R)}\subset g^{-1}(\er)$
  \item for $s\in \er$ we have
\begin{equation*}
\begin{split}
\grad_g f(x,y,z) &=[1-3(2p+1)x^{2p}y^{2q}+2(3p+1)x^{3p}y^{3q},0,y]\\
&=[1-3(2p+1)x^{2p}s^{2q}+2(3p+1)x^{3p}s^{3q},0,s] 
\end{split}
\end{equation*}
for $(x,y)\in D_{(U,R)}\cap g^{-1}(s)$. If $s\neq 0$ then $\|\grad_g f(x,y,z)\|\geq\|s\|>0$ and if $s=0$ we have $\|\grad_g f(x,y,z)\|=1.$
\end{enumerate}
Therefore $f$ satisfies the $(g,S)$-Malgrange condition %(or even stronger $D$-Freduryk's type) 
and using Theorem \ref{tw.tryw foliacji} we deduce that $f$ is a trivial fibration.\\
\end{example}

In general, finding a suitable function $g$ can be very difficult. In the case when $f$ is a coordinate of a mapping with non-vanishing jacobian, the natural candidates for $g$ are other coordinates of this mapping.

\begin{example}\label{exa4new} 
%\textbf{3.} 
Let $F=(f_1,f_2):\er^2\to\er^2$ and $Jac(F)=1$, where $Jac(F)$ is the jacobian of $F$. Than we have
$$
\|\grad_{f_2} f_1 (x,y)\|^2=\frac{Jac(F)^2}{\|\grad f_2(x,y)\|^2}=\frac{1}{\|\grad f_2(x,y)\|^2},\quad (x,y)\in\er^2.
$$
%$$
%\|\grad_{f_2} f_1 (x,y)\|^2=%\|(\frac{\partial f_1}{\partial x} (x,y)-\frac{\frac{\partial f_1}{\partial x}(x,y) \frac{\partial f_2}{\partial x}(x,y)+\frac{\partial f_1}{\partial y}(x,y) \frac{\partial f_2}{\partial y}(x,y)}{\frac{\partial f_2}{\partial x}(x,y)^2+\frac{\partial f_2}{\partial y}(x,y)^2} \frac{\partial f_2}{\partial x}(x,y) ,
%$$
%$$
%\frac{\partial f_1}{\partial y} (x,y)-\frac{\frac{\partial f_1}{\partial x}(x,y) \frac{\partial f_2}{\partial x}(x,y)+\frac{\partial f_1}{\partial y}(x,y) \frac{\partial f_2}{\partial y}(x,y)}{\frac{\partial f_2}{\partial x}(x,y)^2+\frac{\partial f_2}{\partial y}(x,y)^2} \frac{\partial f_2}{\partial y}(x,y))\|^2=
%$$
%$$
%=\frac{1}{\|\grad f_2(x,y)\|^4} (\frac{\partial f_2}{\partial y}(x,y) ^2 (\frac{\partial f_1}{\partial x}(x,y)\frac{\partial f_2}{\partial y}(x,y)-\frac{\partial f_1}{\partial y}(x,y)\frac{\partial f_2}{\partial x}(x,y))^2+
%$$
%$$
%\frac{\partial f_2}{\partial x}(x,y) ^2 (\frac{\partial f_1}{\partial y}(x,y)\frac{\partial f_2}{\partial x}(x,y)-\frac{\partial f_1}{\partial x}(x,y)\frac{\partial f_2}{\partial y}(x,y))^2)=
%$$
%$$
%=\frac{1}{\|\grad f_2(x,y)\|^2}(\frac{\partial f_1}{\partial x}(x,y)\frac{\partial f_2}{\partial y}(x,y)-\frac{\partial f_1}{\partial y}(x,y)\frac{\partial f_2}{\partial x}(x,y))^2=
%$$
%$$
%=\frac{Jac(F)^2}{\|\grad f_2(x,y)\|^2}=\frac{1}{\|\grad f_2(x,y)\|^2}
%$$
Therefore $f_1$ satisfies the $(f_2,\er)$-Malgrange condition over $\er$ if and only if there exists $R>0$ such that
$$
\|(x,y)\|>\delta_s \|\grad f_2(x,y)\|
%((\frac{\partial f_2}{\partial x}(x,y))^2+(\frac{\partial f_2}{\partial x}(x,y))^2) 
\quad \text{for } (x,y)\in f_2 ^{-1} (s),\|(x,y)\|>R,\;s\in\er.
$$

\end{example}

From Example \ref{exa4new} we get

\begin{remark}
Let $F=(f_1,f_2):\er^2\to\er^2$ be a smooth mapping with $Jac(F)=1$ and $f_2$ be a polynomial, such that $\deg f_2\le 2$.
Then the mapping $F$ is injective.
\end{remark}

\section{$\rho_0$-regularity on manifolds}

In this section we will consider a different condition that allows us to trivialize functions defined on the manifold of the form $g^{-1}(0)$ at infinity. 

Let $f,g\in C^{\infty} (\er^n)$. Assume that $\grad g(x) \neq 0$ for $x\in g^{-1}(0)$ and denote $M:=g^{-1}(0)$, $\rho_0:=\|\cdot\|^{2}|_M$. 

The critical set $\mathcal{M}_0(f_M)$ of the map $(f_M,\rho_0):M\to\er$ we will call \textit{the Milnor set of $f_M$} (with respect to $\rho_0$ function).

A value $\lambda\in \er$ is called a \emph{$\rho_0$-regular value of  $f_M$ at infinity} if there exist a neighborhood $U$ of $\lambda$ and a constant $R>0$ such that

 \begin{equation*}\label{rho_0}
 \forall_{x\in f_{M}^{-1} (U)} \quad x\notin \mathcal{M}_0(f_M) \quad \text{for} \quad \|x\|\geqslant R.
\end{equation*}  
The set $S_0(f_M)$ of all values that are not a $\rho_0$ regular value of $f_M$ at infinity will be called the set of \textit{asymptotic $\rho_0$-nonregular values} of $f_M$ i.e. 
$$
S_0(f_M):=\{\lambda\in\er |\; \exists_{(x_k)\subset \mathcal{M}_0(f_M)} \|x_k\|\to\infty , f_M(x_k)\to\lambda \}.
$$

Our aim is to prove the following theorem
\begin{theorem}\label{tw. trywializacja war. N}
If $\lambda$ is a $\rho_0$-regular value of $f_M$ at infinity than  there exists neighborhood $U$  of the $\lambda$ and $R>0$  such that ${f_M}| _{f_M ^{-1}(U)}$ is a $C^{\infty}$ trivial fibration  on $M$ at infinity.
\end{theorem}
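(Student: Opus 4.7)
The plan is to follow the pattern of Theorem \ref{tw.tryw foliacji}, but with a vector field that in addition stays tangent to the level sets of $\rho_0$; this will trap the integral curves on compact spheres in $M$ and avoid any need for growth estimates. First shrink $U$ to a bounded open interval around $\lambda$ and enlarge $R$ if necessary so that
\[
\mathcal{M}_0(f_M) \cap f_M^{-1}(U) \cap \{x\in M:\|x\|\geqslant R\} = \emptyset;
\]
this is possible by the hypothesis. Set $\Omega := f_M^{-1}(U)\cap\{x\in M:\|x\|>R\}$. Because $x\notin \mathcal{M}_0(f_M)$ is equivalent to $\grad f_M(x)$ and $\grad \rho_0(x)$ being linearly independent in $T_xM$, the Gram--Schmidt vector
\[
\tilde v(x) := \grad f_M(x) - \frac{\langle \grad f_M(x),\grad \rho_0(x)\rangle}{\|\grad \rho_0(x)\|^2}\grad \rho_0(x)
\]
is non-zero on $\Omega$, and
\[
v(x) := \frac{\tilde v(x)}{\|\tilde v(x)\|^2}
\]
is a smooth vector field on $\Omega$ satisfying $v(x)\in T_xM$, $\langle\grad f(x),v(x)\rangle=1$ and $\langle\grad \rho_0(x),v(x)\rangle=0$.

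Next, for $\mu\in U$ I would integrate the system $x'(t)=(\lambda-\mu)v(x)$. The last orthogonality immediately gives the first integral $\rho_0(x(t))=\rho_0(x(0))$, so every trajectory lies on the compact set $M\cap\{\|y\|=\|x(0)\|\}$ and, in particular, stays outside the closed ball of radius $R$; differentiating $f\circ x$ gives $f(x(t))=\mu+t(\lambda-\mu)$, so for $t\in[0,1]$ the value $f(x(t))$ is in the segment between $\mu$ and $\lambda$ and hence in $U$ (because $U$ was chosen as an interval). The trajectory is therefore confined to the compact set
\[
K := M\cap\{\|y\|=\|x(0)\|\}\cap f^{-1}(\overline{[\mu,\lambda]})\subset\Omega,
\]
so by Lemma \ref{lem.uciekanie} its maximal domain contains $[0,1]$ (otherwise the product $[0,1]\times K$ would give a compact subset of $\er\times M$ from which the graph cannot escape, contradicting the lemma).

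With the flow $\Phi_\mu(\tau,\eta,t)$ of the system $x'=(\lambda-\mu)v(x)$ in hand, one finishes exactly as in the proof of Theorem \ref{tw.tryw foliacji}: the map
\[
\Psi:\Omega\ni x\mapsto\bigl(\Phi_{f(x)}(0,x,1),f(x)\bigr)\in \bigl(f_M^{-1}(\lambda)\cap\{\|y\|>R\}\bigr)\times U
\]
is a $C^\infty$-diffeomorphism with inverse $(\xi,\mu)\mapsto\Phi_\mu(1,\xi,0)$, which gives the claimed trivialization of $f_M$ at infinity. The main technical point in the whole argument is the construction of $v$: the hypothesis of $\rho_0$-regularity at infinity is precisely what is needed to guarantee that the tangential Gram--Schmidt projection $\tilde v$ does not degenerate outside a large ball. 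Once $v$ is available the integration step is actually simpler than the one in Theorem \ref{tw.tryw foliacji}, because the \emph{a priori} preservation of $\rho_0$ replaces the delicate logarithmic growth estimate used there.
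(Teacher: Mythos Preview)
Your proposal is correct and coincides with the paper's proof: your Gram--Schmidt field $\tilde v$ is exactly the vector field $v$ of the paper (this is the content of equation~(\ref{eq. v}) in Lemma~\ref{lem. wlasnosci v}, since $\grad\rho_0=2\pi_{M^*}(x)$), and because $\langle\grad f_M,\tilde v\rangle=\|\tilde v\|^2$ your normalization by $\|\tilde v\|^2$ agrees with the paper's normalization by $\langle v,\grad f_M\rangle$. The remainder of the argument --- preservation of $\|x\|$, the affine formula for $f_M\circ\varphi_x$, the compactness contradiction via Lemma~\ref{lem.uciekanie}, and the diffeomorphism $\Psi$ with inverse $\Theta$ --- is identical to the paper's.
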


The proof of the theorem will be preceded by some technical properties.
 
Let $M^{*}:=M\setminus \mathcal{M}_0(f_M)$ and consider
the vector field $v:M^* \to \er^n$
\[
\begin{split}
v(x)&:=\grad f(x)+\dfrac{\langle\grad g(x),x\rangle\langle\grad g(x),\grad f(x)\rangle-\|\grad g(x)\|^2 \langle x,\grad f(x)\rangle}{\|x\|^2 \|\grad g(x)\|^2-\langle\grad g(x),x\rangle^2}x \\
&+\frac{\langle\grad g(x),x\rangle\langle x,\grad f(x)\rangle -\|x\|^2 \langle\grad g(x),\grad f(x)\rangle}{\|x\|^2 \|\grad g(x)\|^2-\langle\grad g(x),x\rangle^2}\grad g(x) \text{ for } x\in M^*.
\end{split}
\]
The field $v$ is well defined. Indeed if $\|x\| \|\grad g(x)\|=|\langle\grad g(x),x\rangle|$ then using the Cauchy–Schwarz inequality we deduce that $x$ and $\grad g(x)$ are linearly dependent. We get $\grad \rho_0(x)=0$ and therofore $x\in\mathcal{M}_0(f_M)$ which contradicts the assumptions.

From the definition of $v$ we see that $v(x)$ is tangent to the manifold $M^*$ and to the sphere $\partial B(x):=\{y\in \er^n | \quad \|y\|=\|x\|\}$. Namely, we have

\begin{property}\label{propertyofv}
For any $x\in M^*$ we have  $\langle v(x),x\rangle =\langle v(x),\grad g(x)\rangle=0$.
\end{property}

\begin{proof} Take $x\in M^*$. Then% we have
\[
\begin{split}
\langle v(x),x\rangle=&\langle \grad f(x),x\rangle\\
&+\dfrac{-\|\grad g(x)\|^2 \langle x,\grad f(x)\rangle\|x\|^2+\langle\grad g(x),x\rangle^2\langle x,\grad f(x)\rangle
}{\|x\|^2 \|\grad g(x)\|^2-\langle\grad g(x),x\rangle^2}, 
%\\
%&+\frac{\langle\grad g(x),x\rangle\langle x,\grad f(x)\rangle -\|x\|^2 \langle\grad g(x),\grad f(x)\rangle}{\|x\|^2 \|\grad g(x)\|^2-\langle\grad g(x),x\rangle^2}\grad g(x) .
\end{split}
\]
therefore
\[
\begin{split}
\langle v(x),x\rangle=&\langle \grad f(x),x\rangle\\
&+\langle \grad f(x),x\rangle\dfrac{-\|\grad g(x)\|^2 \|x\|^2+\langle\grad g(x),x\rangle^2
}{\|x\|^2 \|\grad g(x)\|^2-\langle\grad g(x),x\rangle^2}=0 ,
%\\
%&+\frac{\langle\grad g(x),x\rangle\langle x,\grad f(x)\rangle -\|x\|^2 \langle\grad g(x),\grad f(x)\rangle}{\|x\|^2 \|\grad g(x)\|^2-\langle\grad g(x),x\rangle^2}\grad g(x) .
\end{split}
\]
which gives that $\langle v(x),x\rangle=0$. 
Analogously as above we obtain that $\langle v(x),\grad g(x)\rangle=0$.
\end{proof}
%{\bf Pozostawic tylko te oznaczenia, ktore potrzebne sa do zapisania i) i iii). Pozostale wprowadzic w dowodzie. war. ii) w dowodzie }
%%$$
%%\Omega :=\{w:M^*\to\er^n ;\; w- smooth\},
%%$$
%%$$
%%TS:=\{w\in\Omega; \forall_{x\in M^*}\; \langle w(x),x\rangle =0\},
%%$$
%%$$ 
%%\pi_S :\Omega\to TS,\quad  \pi_S (w(x)):=w(x)-\frac{\langle w(x),x\rangle}{\|x\|^2}x ,
%%$$
%%$$
%%T(\pi_S {M}^*)^{\perp}:=\{w\in\Omega; \forall_{x\in {M}^*}\; \langle w(x),\pi_s(\grad g(x))\rangle =0\}, 
%%$$
%%$$\pi_{(\pi_S {M}^*)^{\perp}} :\Omega\to T(\pi_S {M}^*)^{\perp},
%%$$ 
%%$$ 
%%\pi_{(\pi_S {M}^*)^{\perp}}(w(x)):=w(x)-\frac{\langle w(x),\pi_S (\grad g(x))\rangle}{\|\pi_S (\grad g(x)\|^2}\pi_S(\grad g(x)),
%%$$
%%$$
%%TM^*:=\{w\in\Omega; \forall_{x\in {M}^*}\; \langle w(x),\grad g(x)\rangle =0\}, 
%%$$
%%$$
%%\pi_{{M}^*} :\Omega\to TM^*,  \pi_{{M}^*} (w(x)):=w(x)-\frac{\langle w(x),\grad g(x)\rangle}{\|\grad g(x)\|^2}\grad g(x) ,
%%$$
%%$$
%%T(\pi_{{M}^*} S)^{\perp}:=\{w\in\Omega; \forall_{x\in {M}^*}\; \langle w(x),\pi_{{M}^*} (x)\rangle =0\}, 
%%$$
%%$$
%%\pi_{(\pi_{{M}^*} S)^{\perp}} (w(x)):=w(x)-\frac{\langle w(x),\pi_{{M}^*}(x)\rangle}{\|\pi_{{M}^*}(x)\|^2}\pi_{{M}^*} (x).
%%$$

%We prove some property of $v$. 

\begin{lemma}\label{lem. wlasnosci v}
For $  x\in M^*$  we have $\langle v(x),\grad f_M(x)\rangle\neq0 .$% \text{ for }.$$

%\begin{description}
%\item[i)] $\langle v(x),x\rangle =\langle v(x),\grad g(x)\rangle=0$ for $x\in M^*$.
%\item[ii)] $v(x)=\pi_{(\pi_S {M}^*)^{\perp}} (\pi_S $
% $(\grad f(x)))=\pi_{(\pi_{M}^* S)^{\perp}}(\grad f_M(x))$ for $x\in M^*$,
% \item[iii)] $\langle v(x),\grad f_M(x)\rangle=0 \Leftrightarrow \exists _{t\in\er} \grad f_M (x)=t \pi _M (x)$ for $x\in M^*$.
%\end{description}
\end{lemma}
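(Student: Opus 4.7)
The plan is to identify $v(x)$ as the orthogonal projection of $\grad f(x)$ onto the orthogonal complement of $\mathrm{span}(x,\grad g(x))$ in $\er^n$; this will reduce $\langle v(x),\grad f_M(x)\rangle$ to $\|v(x)\|^2$, after which it remains only to show that $v(x)\neq 0$ for $x\in M^*$.

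First I would use Property \ref{propertyofv}: since $\langle v(x),\grad g(x)\rangle=0$ and $\grad f_M(x)-\grad f(x)$ is a scalar multiple of $\grad g(x)$ by definition (\ref{def.grad f_M}), we obtain $\langle v(x),\grad f_M(x)\rangle=\langle v(x),\grad f(x)\rangle$. Next, inspecting the formula defining $v$ reveals that $v(x)-\grad f(x)\in\mathrm{span}(x,\grad g(x))$. Combining this with Property \ref{propertyofv} (which places $v(x)$ in $\mathrm{span}(x,\grad g(x))^{\perp}$) yields $\langle v(x),v(x)-\grad f(x)\rangle=0$, hence $\langle v(x),\grad f(x)\rangle=\|v(x)\|^2$. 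So the task reduces to verifying that $v(x)\neq 0$.

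To finish, I would argue by contradiction. Suppose $v(x)=0$; then $\grad f(x)=\alpha x+\beta\grad g(x)$ for some scalars $\alpha,\beta\in\er$. Substituting into (\ref{def.grad f_M}), the contribution of $\beta\grad g(x)$ is absorbed into the projection term and only $\alpha\bigl(x-\tfrac{\langle x,\grad g(x)\rangle}{\|\grad g(x)\|^2}\grad g(x)\bigr)$ remains. The vector in parentheses is precisely the orthogonal projection of $x$ onto $T_xM$, which equals $\tfrac12\grad\rho_0(x)$ because $\rho_0=\|\cdot\|^2|_M$. Hence $\grad f_M(x)$ and $\grad\rho_0(x)$ are linearly dependent at $x$ (possibly both zero), so the map $(f_M,\rho_0)$ has rank less than $2$ at $x$; i.e.\ $x\in\mathcal{M}_0(f_M)$, contradicting $x\in M^*$.

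The main obstacle I expect is recognising the projection structure that collapses $\langle v(x),\grad f(x)\rangle$ to $\|v(x)\|^2$; once this observation is made, the contradiction is essentially forced, because the coefficients $A,B$ in the definition of $v$ are tailored so that the vanishing of $v(x)$ makes $\grad f_M(x)$ proportional to the tangential component of $x$, which is nothing but $\tfrac12\grad\rho_0(x)$.
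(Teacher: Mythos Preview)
Your argument is correct and rests on the same geometric idea as the paper's proof: both identify $v(x)$ as an orthogonal projection and exploit this to rule out $\langle v(x),\grad f_M(x)\rangle=0$ via the linear-dependence criterion that defines $\mathcal{M}_0(f_M)$.

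The difference is in the packaging. The paper views $v(x)$ as a \emph{nested} projection inside $T_xM$: it first forms $\grad f_M(x)=\pi_{M^*}(\grad f(x))$ and then projects out the component along $\pi_{M^*}(x)=\tfrac12\grad\rho_0(x)$, verifying by an explicit computation that this two-step projection reproduces the defining formula for $v$. From there, $\langle v,\grad f_M\rangle=0$ is rewritten as the Cauchy--Schwarz equality $\|\grad f_M(x)\|^2\|\pi_{M^*}(x)\|^2=\langle\grad f_M(x),\pi_{M^*}(x)\rangle^2$, forcing $\grad f_M(x)$ and $\grad\rho_0(x)$ to be dependent. You instead work directly in $\er^n$, observing from the formula and Property~\ref{propertyofv} that $v(x)$ is the single-step projection of $\grad f(x)$ onto $\mathrm{span}(x,\grad g(x))^{\perp}$; this immediately gives $\langle v(x),\grad f_M(x)\rangle=\langle v(x),\grad f(x)\rangle=\|v(x)\|^2$, so only $v(x)\neq 0$ needs checking. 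Your route is shorter because it bypasses the lengthy verification of the paper's equation~(\ref{eq. v}) and replaces the Cauchy--Schwarz step by a norm-squared identity; the paper's presentation, on the other hand, makes the intrinsic interpretation on $T_xM$ (projection of $\grad f_M$ orthogonal to $\grad\rho_0$) more visible.
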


\begin{proof}
Denote 
$$
\Omega :=\{w:M^*\to\er^n |\; w- smooth\},
$$
$$\Omega(TM^*):=\{w\in\Omega|\; \forall_{x\in {M}^*}\; \langle w(x),\grad g(x)\rangle =0\}
$$
and let $\pi_{{M}^*} :\Omega\to \Omega(TM^*)$ be a mapping defined by
$$
  \pi_{{M}^*} (w(x)):=w(x)-\frac{\langle w(x),\grad g(x)\rangle}{\|\grad g(x)\|^2}\grad g(x) \quad \text{for } x\in M^*.
$$
At first we will prove that 
\begin{equation}\label{eq. v}
v(x)=\pi_{(\pi_{{M}^*}(x))^{\perp}}(\grad f_M(x)) \text{ for } x\in M^*,
\end{equation}
 where
$$
\pi_{(\pi_{{M}^*}(x))^{\perp}} (w(x)):=w(x)-\frac{\langle w(x),\pi_{{M}^*}(x)\rangle}{\|\pi_{{M}^*}(x)\|^2}\pi_{{M}^*} (x)
 \text{ for } x\in M^*,w\in\Omega.
$$

From definitions and simple calculations for $x\in M^*$ we have 
$$
\pi_{(\pi_{{M}^*}(x))^{\perp}}(\grad f_M(x))=\grad f_M(x)-\frac{\langle \grad f_M(x),\pi_{M^*}(x)\rangle}{\|\pi_{M^*}(x)\|^2}\pi_{M^*} (x)=
$$
$$
=\grad f(x)-\frac{\langle \grad f(x),\grad g(x)\rangle}{\|\grad g(x)\|^2}\grad g(x)-\frac{\langle \grad f(x),\pi_{M^*}(x)\rangle}{\|\pi_{M^*}(x)\|^2}\pi_{M^*} (x)=
$$
%$$
%=\grad f(x)-\frac{\langle \grad f(x),\grad g(x)\rangle}{\|\grad g(x)\|^2}\grad g(x)+
%$$
%$$-\frac{\langle \grad f(x),x-\frac{\langle x,\grad g(x)\rangle}{\|\grad g(x)\|^2}\grad g(x)\rangle \|\grad g(x)\|^2}{\|x\|^2\|\grad g(x)\|^2-\langle x,\grad g(x)\rangle^2} (x-\frac{\langle x,\grad g(x)\rangle}{\|\grad g(x)\|^2}\grad g(x)\rangle)=
%$$
$$
=\grad f(x)+\dfrac{\langle\grad g(x),x\rangle\langle\grad g(x),\grad f(x)\rangle-\|\grad g(x)\|^2 \langle x,\grad f(x)\rangle}{\|x\|^2 \|\grad g(x)\|^2-\langle\grad g(x),x\rangle^2}x+
$$
$$
+\frac{\langle \grad f(x),\|\grad g(x)\|^2 x-\langle \grad g(x),x\rangle \grad g(x)\rangle \langle\grad g(x),x\rangle}{\|\grad g(x)\|^2(\|x\|^2 \|\grad g(x)\|^2-\langle\grad g(x),x\rangle^2)}\grad g(x)+
$$
$$
-\frac{ \langle\grad f(x),\grad g(x)\rangle(\|x\|^2 \|\grad g(x)\|^2-\langle\grad g(x),x\rangle^2)}{\|\grad g(x)\|^2(\|x\|^2 \|\grad g(x)\|^2-\langle\grad g(x),x\rangle^2)}\grad g(x)=v(x).$$
which proves (\ref{eq. v}).

For $x\in M^*$ we have
\[
\begin{split}
&\langle v(x),\grad f_M(x)\rangle=0\\
& \Leftrightarrow \langle \pi_{(\pi_{{M}^*}(x))^{\perp}}(\grad f_M(x)),\grad f_M(x)\rangle=0\\ %\Leftrightarrow
&\Leftrightarrow \|\grad f_M (x)\|^2 \|\pi_{M^*} (x)\|^2 =\langle \grad f_M(x),\pi_{M^*} (x)\rangle^2
\\
&\Rightarrow x\in\mathcal{M}_0(f_M)
\end{split}
\]
which completes the proof.
\end{proof}

\begin{remark}
Analogously as in the proof of Lemma \ref{lem. wlasnosci v} we can prove that $v(x)=\pi_{(\pi_{\partial B} (\grad g(x)))^{\perp}} (\pi_{\partial B} (\grad f(x))$, where
\[\begin{split}
\pi_{\partial B} (w(x))&:=w(x)-\frac{\langle w(x),x\rangle}{\|x\|^2}x,\\
\pi_{(\pi_{\partial B} (\grad g(x)))^{\perp}}(w(x))&:=w(x)-\frac{\langle w(x),\pi_{\partial B} (\grad g(x))\rangle}{\|\pi_{\partial B} (\grad g(x))\|^2}\pi_{\partial B}(\grad g(x)) 
\end{split}
\]
for $x\in M^*,w\in \Omega$.
\end{remark}

%\medskip
%\textbf{Definition.} 

We are ready to prove Theorem \ref{tw. trywializacja war. N}.

\begin{proof}
By the assumption that $\lambda$ is a $\rho_0$-regular value of $f_M$ at infinity, there exist a neighborhood $U$ of the $\lambda$ and $R>0$ such that
$$(\mathcal{M}_0(f_M)\cap f_M ^{-1}(U) )\backslash \overline{B(R)}=\emptyset,$$
where $\overline{B(R)}:=\{x\in M|\; R\geqslant\|x\|\}$. Let $w(x):=\frac{v(x)}{\langle v(x),f_M(x)\rangle}$ for $x\in f_M ^{-1} (U)\backslash \overline{B(R)}$. From the assumption  and Lemma \ref{lem. wlasnosci v}, $w$ is well defined. For each $\mu\in U$ consider the following system of differential equations
\begin{equation}\label{system mu2}
x'= (\lambda - \mu)w(x)
\end{equation}
with the right hand side defined in the  %%open manifold???
set $G:=\{(t,x)\in \er\times M|\; x\in f^{-1}_M (U)\backslash \overline{B(R)}\}.$
Denote by $\Phi_\mu : V_\mu \to M $ the general solution of system (\ref{system mu2}) and  $V_\mu :=\{ (\tau,\eta,t) \in \er\times M\times R|\; (\tau,\eta) \in G , t\in I_\mu (\tau,\eta)\}$, where $I_\mu (\tau,\eta)$ is a domain of integral solution of $ t\to\Phi_\mu (\tau,\eta,t)$. From a definition of the general solution  we get 
\begin{equation}\label{eq. war poczatkowy 15}
\Phi_\mu (\tau,\eta,\tau)=\eta.
\end{equation} 
Note that Property \ref{propertyofv} implies
\begin{equation}\label{eq. **}
|\Phi_\mu (\tau,\eta,t)\|=\|\eta\| \text{ for } t\in I_\mu (\tau,\eta),(\tau,\eta)\in G,\mu \in U.
\end{equation}

Consider the mapping 
$$ \Psi_1: f^{-1}_M (U)\backslash \overline{B(R)} \ni x \mapsto \Phi_{f(x)} (0,x,1)\in f^{-1}_M (\lambda)\backslash \overline{B(R)}.$$
We show the mapping $ \Psi_1$ is well defined that is $ 1\in I_{f(x)} (0,x)$ for each $x\in f^{-1}_M (U)\backslash \overline{B(R)}$.

Suppose the contrary that there exists $x\in f^{-1}_M (U)\backslash \overline{B(R)}$ such that $1\notin I_{f(x)} (0,x)$  
and denote $\varphi_x (t):= \Phi_{f(x)} (0,x,t)$  for $ t\in I_{f(x)} (0,x).$ From (\ref{system mu2}) and the initial condition (\ref{eq. war poczatkowy 15}) we get 
\begin{equation}\label{eq. ***}
f_M \circ \varphi_x (t)=(\lambda-f(x))t+f(x),\quad t \in I_{f(x)} (0,x).
\end{equation}
Denoting $J$ as closed interval with endpoints $\lambda$ and $f(x)$ we see that the set
$$
F:=\{(t,x')\in \er\times f_M ^{-1}(U)\backslash \overline{B(R)}\; |\; t\in [0,1], f_M (x')\in J, \|x'\|=\|x\|\}.
$$
is a compact subset of $G$ such that the graph of ${\varphi_x |}_{ [0,\beta)}$ is contained in $F$. This contradicts Lemma \ref{lem.uciekanie} and proves $[0,1]\subset I_{f(x)} (0,x)$.

Using (\ref{eq. ***}) we get $f(\Psi_1(x))=f(\varphi_x(1))=\lambda$ and from (\ref{eq. **}) we have $\Psi_1(x)\in f_M^{-1}(\lambda)\backslash \overline{B(R)}$. Summing up we show that the mapping $\Psi_1$ is defined correctly. 
Similarly we can show that the mapping 
$$\Theta: f_M ^{-1} (\lambda)\backslash \overline{B(R)}\times U \ni (\xi,\mu)\mapsto \Phi_\mu (1,\xi,0)\in f_M ^{-1} (U)\backslash \overline{B(R)}$$
 is well defined. It is easy to check that the mapping 
$$\Psi: f_M ^{-1} (U) \backslash \overline{B(R)}\ni x \mapsto (\Psi_1 (x),f_M(x))\in f_M ^{-1} (\lambda)\backslash \overline{B(R)}\times U$$  
is a $C^{\infty}$ diffeomorphism and $\Psi^{-1} =\Theta$. Therefore ${f_M}_{|f_M ^{-1}(U)\backslash \overline{B(R)}}$
is a $C^{\infty}$ trivial fibration on $M$.
\end{proof}

\begin{remark}\label{rho}

From Theorem \ref{tw. trywializacja war. N} we have $B_{\infty}(f)\subset S_0 (f).$

\end{remark}

It is worth noting that unlike in a flat case we need to take into account the set $V:=\{x\in M|\;\exists_{t\in\er}\; \grad g(x)=tx\}\subset \mathcal{M}_0(f_M)$. In general the field $v:M\backslash V\to\er^n$ can not be continuously extended on the set $V$. The following example illustrates the fact

\begin{example}
%\textbf{Remark}
 Let 
$$
g(x,y,z):=\dfrac{1}{2}x^2+y^2-\dfrac{1}{2},\quad f(x,y,z)=y\quad \text{for } (x,y,z)\in \er^3.
$$
We have
\[
\begin{split}
&v(x,y,z)=[v^1(x,y,z),v^2(x,y,z),v^3(x,y,z)]\\
&=\left[\frac{-2xyz^2}{x^2 y^2+x^2 z^2+4y^2 z^2},\frac{x^2 z^2}{x^2y^2+x^2z^2+4y^2z^2},\frac{x^2yz}{x^2y^2+x^2z^2+4y^2z^2}\right]
%\end{split}
%\] 
%\end{document}
%\\
%&\left.\frac{x^2yz}{x^2y^2+x^2z^2+4y^2z^2}\right]=\\
% \text{ for }(x,y,z)\in M\backslash V.
\end{split}
\]
 for $(x,y,z)\in M\backslash V$. 
  Note that $(1,0,0)\in V$ and
$$v^2 (1,0,z)=1 \text{ for } z\neq 0 \text{ and } v^2 (x,y,0)=0 \text{ for } x\neq 0 , y\neq 0,$$
therefore the limit $\lim_{(x,y,z)\to (1,0,0)} v^2(x,y,z)$ does not exist.
\end{example}

\medskip
{\bf Acknowledgement.} I would like to thank Stanisław Spodzieja for many conversations and valuable advice.

\bigskip

\noindent Michał Klepczarek\newline 
Faculty of Mathematics and Computer Science, University of \L \'od\'z, \newline
S. Banacha 22, 90-238 \L \'od\'z, POLAND \newline
 \indent E-mail: mikimiki88@tlen.pl

\end{document}